\newtheorem{prop}{Proposition}
\newtheorem{teor}[prop]{Theorem}
\DeclareMathOperator{\PG}{PG}
\newcommand{\calc}{{\cal C}}
\newcommand{\cald}{{\cal D}}
\newcommand{\cale}{{\cal E}}
\newcommand{\calf}{{\cal F}}
\newcommand{\calq}{{\cal Q}}
\newcommand{\calr}{{\cal R}}
\newcommand{\cals}{{\cal S}}
\newcommand{\FF}{\mbox{$\mathbb F$}}
\newcommand{\PP}{\mbox{$\mathbb P$}}
\begin{document}

\title{Geometry of the inversion in a finite field and partitions of $\PG(2^k-1,q)$ in normal rational curves}

\date{\today}
\author{Michel Lavrauw - Corrado Zanella}
\affil{Dipartimento di Tecnica e Gestione dei Sistemi Industriali, 36100 Vicenza, Italy}

\maketitle

%\begin{sloppypar}
\begin{abstract}
  Let $L=\mathbb F_{q^n}$ be a finite field and let $F=\mathbb F_q$ be a subfield of $L$.
  Consider $L$ as a vector space over $F$ and the associated projective space
  that is isomorphic to $\PG(n-1,q)$.
  The properties of the projective mapping induced by $x\mapsto x^{-1}$
  have been studied in \cite{Cs13,Fa02,Ha83,He85,Bu95}, where it is proved that
  the image of any line is a normal rational curve
  in some subspace.
  In this note a more detailed geometric description is achieved.
  Consequences are found related to mixed partitions of the projective spaces;
  in particular, 
  it is proved that for any positive integer $k$, if $q\ge2^k-1$, then 
  there are partitions of $\PG(2^k-1,q)$ in normal rational curves
  of degree $2^k-1$. 
  For smaller $q$ the same construction gives partitions in $(q+1)$-tuples of independent points.\\
  {\sc A.M.S. Classification: 51E20 -- 51E23 -- 05B25.}\\
  Keywords: spread -- partition -- finite projective
  space -- finite field -- normal rational curve
\end{abstract}
%  \end{sloppypar}

\section{Introduction}\begin{sloppypar}
Let $q$ be a power of a prime and $n$ an integer greater than one.
Let $L=\mathbb F_{q^n}$ be a finite field and let $F=\mathbb F_q$ be a subfield of $L$.
The projective space $\PG(n-1,q)$ is 
isomorphic to the projective space associated to the $F$-vector space $L$.
So, the points of $\PG(n-1,q)$ can be represented in the form $Fx$, $x\in L^*$.
Define the mapping $j:\,\PG(n-1,q)\rightarrow\PG(n-1,q)$ by 
\begin{equation}\label{defj}j(Fx)=Fx^{-1}.\end{equation}

In section \ref{proj-min-embedd} it will be shown that 
the map (\ref{defj}) appears spontaneously in the investigations on embeddings of minimum dimension
of the product spaces \cite{LaShZa13}.
It has been already studied in \cite{Fa02,Cs13}, where it is proved that if $q$ is large enough,
then the image of any line is a
normal rational curve of degree $m-1$, with $m$ dividing $n$.
Furthermore, the lines whose image is a line (so $m=2$ for even $n$) form a spread.
Some of the results in \cite{Fa02,Cs13} can be also derived from the description of affine chains given
in \cite{Ha83,He85}; see also \cite{BlHe05,Bu95}. 
The properties of $j$ which are relevant to this paper are described in section \ref{geometry-inverse}.

Section \ref{partitions} is devoted to the investigation on partitions of a finite projective space.
In \cite{Co00} mixed partitions of $\PG(3,q)$ consisting of two lines and $q^2-1$ normal rational curves
are found.
By the above results, the images under $j$ of 
the line spreads of $\PG(3,q)$ are mixed partitions containing
different numbers of lines. There are also partitions made only by normal rational curves.
A construction is described which can be extended to a partition in normal rational curves of any $\PG(2^k-1,q)$
(theorem\ \ref{x7}). \end{sloppypar}

\section{Projections in minimum embeddings}\label{proj-min-embedd}
\begin{sloppypar}
The study of the geometry of the inversion in a finite field is motivated by previous work \cite{LaShZa13}; 
it arises from
one of the projections in minimum embeddings of product spaces, as will be explained in this section.
We will assume that the reader is acquainted with the basic results and notation from  \cite{LaShZa13}, but 
skipping this
section should not affect the understanding of the following sections.
Consider the hypersurface $\calq_{n-1,q}$ of $\PG(2n-1,q)$ defined in \cite[(4,7,8)]{LaShZa13},
generalizing the semifield embeddings in $\PG(3,q)$ and $\PG(5,q)$ \cite{LaZa13}.
Assume that $h$ and $h'$ are integers such that $0<h'<h\le n-1$ and $\gcd(h-h',n)=1=\gcd(h',n)$, 
and take $k\in L$ with
$N(k)=1$.
For any point $P\in S_{h,k}$ \cite[(11)]{LaShZa13} precisely one subspace $S_{P,h,k}\in\cals_{h'}$ exists satisfying $P\in S_{P,h,k}$.
The correspondence $P\mapsto S_{P,h,k}$ is a bijection \cite[prop.\ 7]{LaShZa13}.
The map $\psi_{h,h',k}:\,S_{h,k}\rightarrow S_{0,1}$ defined by
$\psi_{h,h',k}(P)=S_{P,h,k}\cap S_{0,1}$ is a bijection and will be called the
\textit{projection} of indexes $h,h'$ and $k$.
\end{sloppypar}

In the following proposition, $\theta_s=(q^{s+1}-1)/(q-1)$ for any integer $s\ge-1$.
\begin{prop}
  Assume that $h$ and $h'$ are integers such that $0<h'<h\le n-1$ and $\gcd(h-h',n)=1=\gcd(h',n)$, 
  and that $k\in L$, $N(k)=1$.
  Then for any $x\in L^*$, it holds
    $\psi_{h,h',k}(F(x,kx^{q^h}))=F(t,t)$, where
   \begin{equation}\label{psi}
   t=\ell x^{\displaystyle q^{h'}\frac{1-q^{h-h'}}{q-1}e}, \quad \ell^{q-1}=k^{-e},\quad
   e\theta_{h'-1}\equiv1\mod \theta_{n-1}.
  \end{equation}
\end{prop}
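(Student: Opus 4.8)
The plan is to make every subspace occurring in the definition of $\psi_{h,h',k}$ completely explicit. Recall from \cite{LaShZa13} that, under the identification of $\PG(2n-1,q)$ with the projective space over the $F$-vector space $L\times L$, the subspace $S_{a,b}$ is the set of points $F(y,by^{q^a})$ with $y\in L^*$ (a genuine $(n-1)$-dimensional subspace, since $y\mapsto by^{q^a}$ is $F$-linear), that $\cals_{h'}=\{\,S_{h',k'}:k'\in L,\ N(k')=1\,\}$, and that $S_{0,1}=\{\,F(t,t):t\in L^*\,\}$. With this, the argument splits into two parts: first identify $S_{P,h,k}$ for $P=F(x,kx^{q^h})$, then intersect it with $S_{0,1}$.

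For the first part I would show that $P\in S_{h',k'}$ exactly when $(x,kx^{q^h})$ is $F$-proportional to some $(y,k'y^{q^{h'}})$; writing $y=\lambda x$ with $\lambda\in F^*$ and using $\lambda^{q^{h'}}=\lambda$, this collapses to the single condition $k'=kx^{q^h-q^{h'}}$. Since $N(x)\in F^*$ and $(q-1)\mid(q^h-q^{h'})$, one has $N(k')=N(k)\,N(x)^{q^h-q^{h'}}=1$, so $S_{h',k'}\in\cals_{h'}$; by the uniqueness of the member of $\cals_{h'}$ through $P$ recalled in the text, $S_{P,h,k}=S_{h',k'}$ with this $k'$.

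For the second part, $F(t,t)\in S_{h',k'}$ means (again after absorbing an $F^*$-scalar and using $\lambda^{q^{h'}}=\lambda$) that $k't^{q^{h'}}=t$, i.e.\ $t^{q^{h'}-1}=(k')^{-1}=k^{-1}x^{q^{h'}-q^h}$. Because $\gcd(h',n)=1$, the kernel of $t\mapsto t^{q^{h'}-1}$ on $L^*$ is $F^*$, so this equation determines a unique projective point $F(t,t)$, the claimed intersection. It remains to check that the element $t=\ell\,x^{q^{h'}(1-q^{h-h'})e/(q-1)}$ of (\ref{psi}) is a solution. The exponent $e$ exists and is unique modulo $\theta_{n-1}$ because $\gcd(\theta_{h'-1},\theta_{n-1})=\theta_{\gcd(h',n)-1}=\theta_0=1$, and $\ell$ exists because $k^{-e}$ is a $(q-1)$-th power in $L^*$: an element of $L^*$ is a $(q-1)$-th power precisely when its norm to $F$ equals $1$, and $N(k^{-e})=N(k)^{-e}=1$. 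Denoting the exponent of $x$ by $a$ and raising $t=\ell x^a$ to the power $q^{h'}-1=(q-1)\theta_{h'-1}$, one gets $\ell^{q^{h'}-1}=(\ell^{q-1})^{\theta_{h'-1}}=k^{-e\theta_{h'-1}}=k^{-1}$, using $e\theta_{h'-1}\equiv1\pmod{\theta_{n-1}}$ and $k^{\theta_{n-1}}=N(k)=1$; and $a(q^{h'}-1)=(q^{h'}-q^h)\,e\theta_{h'-1}\equiv q^{h'}-q^h\pmod{q^n-1}$, using the same congruence for $e$ together with $(q-1)\mid(q^{h'}-q^h)$ and $q^n-1=(q-1)\theta_{n-1}$. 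Hence $t^{q^{h'}-1}=k^{-1}x^{q^{h'}-q^h}$, so $\psi_{h,h',k}(P)=S_{P,h,k}\cap S_{0,1}=F(t,t)$.

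I expect the only genuinely delicate point to be the exponent bookkeeping. The element $x^a\in L^*$ depends on $a$ only modulo $q^n-1$, whereas $e$ (hence $a$) is pinned down only modulo $\theta_{n-1}$; one must therefore verify that replacing $e$ by $e+\theta_{n-1}$ changes $x^a$ only by a factor in $F^*$ — which holds since $x^{\theta_{n-1}}$ has order dividing $q-1$ — and similarly that $\ell$ is determined up to $F^*$, so that the point $F(t,t)$ is well defined independently of these choices. Everything else is routine manipulation of the identities $q^{h'}-1=(q-1)\theta_{h'-1}$, $q^n-1=(q-1)\theta_{n-1}$, and $N(z)=z^{\theta_{n-1}}$.
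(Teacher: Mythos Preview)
Your proof is correct and follows essentially the same route as the paper's: both reduce the problem to the characterizing equation $t^{q^{h'}-1}=k^{-1}x^{q^{h'}-q^h}$ and then pass between it and the formula (\ref{psi}) via the identities $q^{h'}-1=(q-1)\theta_{h'-1}$ and $e\theta_{h'-1}\equiv1\pmod{\theta_{n-1}}$. The only difference is cosmetic: the paper derives (\ref{psi}) from the equation by successively taking $(q-1)$-th roots and $e$-th powers, whereas you verify that (\ref{psi}) satisfies the equation; you are also more explicit in identifying $S_{P,h,k}$ and in checking existence of $e,\ell$ and well-definedness of $F(t,t)$, points the paper leaves implicit.
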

\begin{proof}
  The definition of $\psi$ implies
  \begin{equation}\label{calcdiretto}
    t^{q^{h'}-1}=k^{-1}x^{q^{h'}-q^h}.
  \end{equation}
  Since $t$ is defined up to a non-zero factor in $F$, it is allowed
  to take $(q-1)$-th roots in (\ref{calcdiretto}), whence
  \begin{equation}\label{q-1roots}
    t^{\theta_{h'-1}}=\ell'x^{-q^{h'}\theta_{h-h'-1}},
  \end{equation}
  where $\ell'$ is such that $\ell'^{q-1}=k^{-1}$.
  The assumption $\gcd(h',n)=1$ implies $\gcd(\theta_{h'-1},\theta_{n-1})=1$,
  so integers $e$ and $\epsilon$ exist such that $e\theta_{h'-1}=\epsilon\theta_{n-1}+1$.
  By taking the $e(q-1)$-th power of (\ref{q-1roots}) and setting $\ell^{q-1}=k^{-e}$ one obtains
  \[
    t^{q-1}=\ell^{q-1}x^{\displaystyle q^{h'}(1-q^{h-h'})e}.
  \]
  Taking once again $(q-1)$-th roots gives (\ref{psi}).
%  A $t\in L^*$ and an $m$ such that $N(m)=1$ exist such that $P=F(t,kt^{q^h})$,
%  and $S_{P,h,k}=S_{1,m}$.
%  This implies $F(t,kt^{q^h})=F(u,mu^q)$ for some $u\in L^*$, and $m=kt^{q^h-q}$.
%  As a consequence, $\psi_{h,k}(P)=F(x,x)$, where $x=kt^{q^h-q}x^q$.
%  Since $k$ is a $(q-1)$-th power an $\ell$ satisfying $\ell^{q-1}=k^{-1}$ exists, whence the (\ref{psi}).  
\end{proof}
The easiest nontrivial case is $h'=1$, $h=2$, leading to the
composition of projectivities and $j$:
\begin{prop}
  Two collineations $S_{2,k}\stackrel{\alpha}{\rightarrow} \PG(n-1,q)\stackrel{\beta}{\rightarrow} S_{0,1}$ 
  exist such that
  $\psi_{2,1,k}=\alpha j\beta$.
\end{prop}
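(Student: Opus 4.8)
The plan is to identify $\psi_{2,1,k}$ as the special case $h'=1$, $h=2$ of the previous proposition and then to recognise the resulting formula as the map $j$ read through two collineations.

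First I would specialise formula (\ref{psi}). When $h'=1$ we have $\theta_{h'-1}=\theta_0=1$, so the congruence $e\theta_{h'-1}\equiv1\pmod{\theta_{n-1}}$ is satisfied by $e=1$, and the exponent of $x$ in (\ref{psi}) collapses to $q^{h'}\frac{1-q^{h-h'}}{q-1}e=q\cdot\frac{1-q}{q-1}=-q$. (Equivalently, already at the level of (\ref{calcdiretto}) one has $t^{q-1}=k^{-1}x^{q-q^2}=k^{-1}(x^{-q})^{q-1}$, so taking $(q-1)$-th roots gives the same thing.) Hence, for every $x\in L^*$,
\[
  \psi_{2,1,k}\bigl(F(x,kx^{q^2})\bigr)=F\bigl(\ell x^{-q},\,\ell x^{-q}\bigr),\qquad \ell^{q-1}=k^{-1},
\]
and the class $F\ell x^{-q}$ does not depend on which $(q-1)$-th root $\ell$ of $k^{-1}$ is chosen, since two such roots differ by a factor in $F^*$.

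Next I would exhibit $\alpha$ and $\beta$. From the definitions in \cite{LaShZa13}, $S_{2,k}=\{F(x,kx^{q^2}):x\in L^*\}$ and $S_{0,1}=\{F(z,z):z\in L^*\}$. The maps $x\mapsto(x,kx^{q^2})$ and $u\mapsto(\ell u^q,\ell u^q)$ from $L$ to $L\times L$ are injective and $F$-linear — here one uses that $y\mapsto y^{q^2}$ and $y\mapsto y^q$ are $F$-linear (Frobenius) bijections of $L$, and that $F$-scalars are fixed by the Frobenius, so the two maps are well defined on projective points — and by a dimension count their images are exactly the $(n-1)$-dimensional subspaces $S_{2,k}$ and $S_{0,1}$ of $\PG(2n-1,q)$. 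Therefore they induce collineations (indeed projectivities)
\[
  \alpha\colon S_{2,k}\to\PG(n-1,q),\quad F(x,kx^{q^2})\mapsto Fx,\qquad
  \beta\colon \PG(n-1,q)\to S_{0,1},\quad Fu\mapsto F(\ell u^q,\ell u^q).
\]

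Finally I would compose. For $P=F(x,kx^{q^2})\in S_{2,k}$, applying $\alpha$ yields $Fx$, then $j$ yields $Fx^{-1}$ by (\ref{defj}), and then $\beta$ yields $F\bigl(\ell(x^{-1})^q,\ell(x^{-1})^q\bigr)=F(\ell x^{-q},\ell x^{-q})$, which by the displayed formula is $\psi_{2,1,k}(P)$. Hence $\psi_{2,1,k}=\alpha j\beta$. There is no genuine obstacle here: the argument is a short computation, and the only things demanding care are bookkeeping — unwinding the definitions of \cite{LaShZa13} so that $S_{2,k}$ and $S_{0,1}$ appear as the explicit graphs above, and noticing that for $h'=1$ the otherwise complicated exponent in (\ref{psi}) reduces to a single power $-q$. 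That last observation is in fact the substance of the proposition: once the projection carries only one power of the Frobenius, it equals $j$ up to collineations, which is precisely why the geometry of $j$ is relevant to these minimum embeddings.
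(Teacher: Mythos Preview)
Your argument is correct and coincides with the paper's own proof: the paper also takes $\alpha:\,F(x,kx^{q^2})\mapsto Fx$ and $\beta:\,Fx\mapsto F\ell(x^q,x^q)$, exactly the collineations you exhibit. You have simply supplied the verification details (specialising (\ref{psi}) at $h'=1$, $h=2$ and checking that $\alpha$, $\beta$ are collineations) that the paper leaves to the reader.
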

\begin{proof}
  Just take $\alpha:\,F(x,kx^{q^2})\mapsto Fx$ and 
  $\beta:\,Fx\mapsto F\ell(x^q,x^q)$.
\end{proof}

\section{Geometric properties}\label{geometry-inverse}

The mapping $j$ is well-defined and $j^2$ is the identity map in $\PG(n-1,q)$.
%In order to avoid ambiguity, the simple algebraic extension of $F$ by $u\in L$ will be denoted by $F(\{u\})$.
The one-dimensional $F$-subspace generated by $u\in F$ is $Fu$, but in cases like $F(u+v)$ the notation 
$\langle u+v\rangle_F$ will be preferred in order to avoid confusion with a simple algebraic extension of $F$.

In \cite[p. 13]{Ha97} the \textit{$n$-uple embedding} of an $r$-dimensional projective space $\PP^r$
over an algebraically closed field $K$ is defined to be the map
$\rho_d:\PP^r\rightarrow\PP^N$ defined by
  \[
    \rho_d(K(x_0,\ldots,x_r))=K(M_0(x_0,\ldots,x_r),\ldots,M_N(x_0,\ldots,x_r)),
  \]
where $N={r+d\choose d}-1$ and $M_0$, $M_1$, $\ldots$, $M_N$ are all homogeneous monomials of
degree $d$ in $x_0$, $x_1$, $\ldots$, $x_r$.

Here normal rational curves (NRCs for short) are defined in partial accord to \cite[Chapter 21]{JWPH2}.
It should be noted that they are \textbf{sets} of points.
In the following, $F$ is the finite field $\FF_q$.
A \textit{rational curve} $\calc_n$ of order $n$ in $\PG(r,q)$ is the set of points
\[
  \{F(g_0(t_0,t_1),\ldots,g_r(t_0,t_1))\mid (t_0,t_1)\in F^2\}
\]
where each $g_i$ is a binary form of degree $n$ and a highest common factor of $g_0$, $g_1$,
$\ldots$, $g_r$ is 1.
Also $\calc_n$ is \textit{normal} if it is not the projection of a rational curve $\calc_n'$ in
$\PG(r+1,q)$, where $\calc_n'$ is not contained in any $r$-subspace of $\PG(r+1,q)$.
Some issues arise from the fact that the order of a NRC is not unique, as is stated
in the next propositions.
\begin{prop}\label{117-1}
  The image of the $r$-uple embedding of $\PG(1,q)$:
  \begin{equation}\label{def_c}
    \calc=\{F(t_0^r,t_0^{r-1}t_1,\ldots ,t_1^r)\mid (t_0,t_1)\in F^2\setminus\{(0,0)\}\}
  \end{equation}
  is a NRC of order $r$ in $\PG(r,q)$.
\end{prop}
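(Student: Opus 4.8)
The plan is to verify the two clauses of the definition of an NRC of order $r$ directly; the first clause is essentially formal, and the second is the only one carrying content.

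\emph{Rational curve of order $r$.} I would set $g_i(t_0,t_1)=t_0^{r-i}t_1^{i}$ for $i=0,1,\dots,r$, so that $\calc=\{F(g_0(t_0,t_1),\dots,g_r(t_0,t_1))\mid(t_0,t_1)\in F^2\setminus\{(0,0)\}\}$. Each $g_i$ is a binary form of degree $r$, there are exactly $r+1$ of them, and any common divisor of the whole family already divides $\gcd(g_0,g_r)=\gcd(t_0^r,t_1^r)=1$. Hence $\calc$ is, by definition, a rational curve of order $r$ in $\PG(r,q)$. If one also wants $\calc$ to span $\PG(r,q)$, note that $g_0,\dots,g_r$ form a basis of the $(r+1)$-dimensional $F$-space of binary forms of degree $r$: a hyperplane $\sum a_ix_i=0$ through $\calc$ would force the form $\sum a_ig_i$ to vanish at all $q+1$ points of $\PG(1,q)$, hence to be identically zero when $q\ge r$; for $q<r$ the $q+1$ points of $\calc$ are still independent but span a proper subspace, in accordance with the remark in the abstract.

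\emph{Normality.} Here I must show that $\calc$ is not the projection of a rational curve $\calc'$ of order $r$ lying in $\PG(r+1,q)$ and contained in no $r$-subspace. I claim that no such $\calc'$ exists at all. By definition such a $\calc'$ would be given by $r+2$ binary forms $g'_0,\dots,g'_{r+1}$ of degree $r$; but the $F$-space of binary forms of degree $r$ has dimension only $r+1$, so $g'_0,\dots,g'_{r+1}$ are $F$-linearly dependent: there is $(a_0,\dots,a_{r+1})\neq(0,\dots,0)$ with $\sum_i a_ig'_i=0$ as a form. Evaluating at any $(t_0,t_1)$ shows that every point $F(g'_0(t_0,t_1),\dots,g'_{r+1}(t_0,t_1))$ of $\calc'$ satisfies $\sum_i a_ix_i=0$, so $\calc'$ is contained in that $r$-subspace of $\PG(r+1,q)$, contradicting the hypothesis. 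Therefore the normality condition is fulfilled vacuously and $\calc$ is an NRC of order $r$ in $\PG(r,q)$.

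The main thing to be careful about is aligning the argument with the precise definition of ``normal'' adopted in the paper (projections from one dimension up, i.e.\ from $\PG(r+1,q)$), and observing that a \emph{polynomial} identity among the coordinate forms becomes, pointwise, a linear relation satisfied by every point of the curve. Past that there is no genuine obstacle: the statement collapses to the dimension inequality $r+2>r+1$ for binary forms of degree $r$ over $F$.
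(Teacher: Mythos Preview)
Your argument is correct. The paper itself states this proposition without proof, treating it as a standard fact (in partial accord with \cite[Chapter 21]{JWPH2}), so there is no proof in the paper to compare against. Your verification is exactly the intended one: the first clause is immediate from the definition, and for normality the decisive point is that the $F$-space of binary forms of degree $r$ has dimension $r+1$, so any $r+2$ coordinate forms of a would-be $\calc'$ of order $r$ in $\PG(r+1,q)$ are linearly dependent and $\calc'$ lies in a hyperplane. One small remark: your aside about $\calc$ spanning $\PG(r,q)$ when $q\ge r$ is correct and useful context, but it is not part of what the proposition asserts (the definition of ``rational curve of order $r$ in $\PG(r,q)$'' adopted here does not require the curve to span), so you may safely omit it from the proof proper.
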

\begin{prop}\label{117-2}
  If $q<r$, then the curve $\calc$ in (\ref{def_c}) is a NRC of order $q$ in a $q$-subspace
  of $\PG(r,q)$.
\end{prop}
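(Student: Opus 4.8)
The plan is to show that the $q+1$ points making up $\calc$ span a subspace $\Pi$ of $\PG(r,q)$ of projective dimension exactly $q$, and that, restricted to $\PG(1,q)$, the $r$-uple embedding factors as a linear injection $\iota$ applied after the $q$-uple embedding; then $\calc=\iota(\mathcal N)$, where $\mathcal N\subseteq\PG(q,q)$ is the image of the $q$-uple embedding of $\PG(1,q)$, and the conclusion follows. The $q+1$ points of $\calc$ are $P_a=F(1,a,a^2,\ldots,a^r)$ for $a\in F$, together with $P_\infty=F(0,\ldots,0,1)$. The single algebraic fact that makes everything work is the collapse of exponents in $F=\FF_q$: since $a^q=a$ for all $a\in F$, one has $a^i=a^{\sigma(i)}$ for every $a\in F$, where $\sigma(0)=0$ and $\sigma(i)=1+\bigl((i-1)\bmod(q-1)\bigr)\in\{1,\ldots,q-1\}$ for $i\ge 1$; thus a representative of $P_a$ involves only the $q$ quantities $1,a,a^2,\ldots,a^{q-1}$, and likewise the $q$-uple embedding sends $(1:a)$ to $F(1,a,\ldots,a^{q-1},a)$ and $(0:1)$ to $F(0,\ldots,0,1)$, so $\mathcal N$ consists of these $q+1$ points of $\PG(q,q)$.

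First I would compute $\dim\Pi$. Projecting the vectors $v_a=(1,a,\ldots,a^r)$ onto their first $q$ coordinates yields the Vandermonde vectors $(1,a,\ldots,a^{q-1})$, $a\in F$, which are linearly independent over $F$; hence the $v_a$ are independent and span a subspace of projective dimension $q-1$. Since $q<r$ forces $\sigma(r)\le q-1<r$, every $v_a$ satisfies the linear relation $x_r=x_{\sigma(r)}$, while the representative $(0,\ldots,0,1)$ of $P_\infty$ does not; so $P_\infty$ lies outside the span of the $P_a$. Hence the $q+1$ points of $\calc$ are independent and $\dim\Pi=q$.

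Next I would exhibit the factorization. Using $\sigma$, I would write down an explicit $(r+1)\times(q+1)$ matrix $M$ such that $M$ applied to a representative of each point of $\mathcal N$ returns the corresponding representative of a point of $\calc$: for $i<r$ the $i$-th row of $M$ just extracts the coordinate equal to $a^{\sigma(i)}=a^i$ and annihilates the last basis vector, whereas for $i=r$ a row of the form $\mathbf{e}_{\sigma(r)}-\mathbf{e}_1+\mathbf{e}_q$ undoes the collapse $a^q=a$ while still sending $(0,\ldots,0,1)$ to $(0,\ldots,0,1)$. Its image contains the independent set $\calc$, so $M$ has rank $q+1$ and hence is injective; the induced linear map $\iota\colon\PG(q,q)\to\PG(r,q)$ has image $\Pi$ and satisfies $\iota(\mathcal N)=\calc$. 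Since $\mathcal N$ carries the standard order-$q$ parametrization by the binary forms $t_0^{q-j}t_1^j$, transporting it through $\iota$ (i.e.\ applying $M$ to the vector of these forms) exhibits $\calc$ as a rational curve of order $q$, whose parametrizing binary forms of degree $q$ keep highest common factor $1$ because $M$ is injective, and which spans the $q$-subspace $\Pi=\iota(\PG(q,q))$.

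It remains to note that normality in $\Pi$ is automatic here: a rational curve of order $q$ is parametrized by binary forms of degree $q$, of which at most $q+1$ are linearly independent, so no rational curve of order $q$ can span a projective space of dimension greater than $q$; in particular $\calc$ cannot be a projection of a rational curve of order $q$ lying in no $q$-subspace of $\PG(q+1,q)$. (Equivalently, one may invoke Proposition~\ref{117-1} with $q$ in place of $r$ to see that $\mathcal N$ is a NRC of order $q$.) Either way, $\calc$ is a NRC of order $q$ in the $q$-subspace $\Pi$ of $\PG(r,q)$. The part needing the most care is the middle one: verifying that the collapsed coordinate vectors indeed arise from the order-$q$ parametrization (so that $\mathcal N$ really is the standard NRC of order $q$) and that $M$ can be defined row by row in a consistent fashion — above all the last row, where the identity $a^q=a$ must be compensated. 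The dimension count and the normality remark are short by comparison.
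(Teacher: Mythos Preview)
The paper states this proposition without proof, treating it (together with Proposition~\ref{117-1}) as a known fact and referring to \cite[Theorem 21.1.1]{JWPH2} only for the remark that the minimal order equals the dimension of the span. So there is no argument in the paper to compare against.

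Your proof is correct and self-contained. The Vandermonde argument for the independence of the $P_a$, the hyperplane $x_r=x_{\sigma(r)}$ separating $P_\infty$ from their span, and the explicit $(r+1)\times(q+1)$ matrix $M$ realising the factorisation through the $q$-uple embedding all check out; in particular the last row $e_{\sigma(r)}-e_1+e_q$ does what you claim on both $(1,a,\ldots,a^{q-1},a)$ and $(0,\ldots,0,1)$, and remains valid in the edge case $\sigma(r)=1$. Two minor remarks: (i) your justification that the transported degree-$q$ forms keep highest common factor $1$ ``because $M$ is injective'' is right but compressed --- the point is that an injective $M$ has a left inverse over $F$, so any common factor of the entries of $Mv$ divides every entry of $v=(t_0^q,\ldots,t_1^q)$; (ii) once you have shown that the $q+1$ points of $\calc$ are independent and hence span exactly a $q$-subspace, the identification $\iota^{-1}(\calc)=\mathcal N$ together with Proposition~\ref{117-1} (applied with $r=q$) already gives the NRC conclusion, so your direct normality argument via the dimension bound on binary forms of degree $q$ is an alternative rather than an additional requirement. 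Either route is fine.
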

In the following, for any NRC the smallest order will be chosen.
Such order equals the dimension of its span \cite[Theorem 21.1.1]{JWPH2}.

\begin{teor}\label{main}
  Let $\ell$ be the line through two distinct points $Fa$, $Fb$ in $\PG(n-1,q)$.
  Then $j(\ell)$ is the image of an $(m-1)$-uple embedding
  with $m$ equal to the extension field degree of $F(ab^{-1})$, $m=[F(ab^{-1}):F]$.
\end{teor}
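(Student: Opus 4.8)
The plan is to reduce to a statement about the orbit of a cyclic multiplicative group acting on a line and then recognize that orbit explicitly as a rational normal curve. First I would choose convenient coordinates: since $\mathrm{PGL}$ acts transitively on pairs of points and scaling $a,b$ does not change the points $Fa$, $Fb$, I may normalize so that $\ell$ is the line through $Fa$ and $Fb$, and the generic point of $\ell$ has the form $F(a+tb)$ with $t\in F$, together with the point $Fb$. Applying $j$ gives the set $\{F(a+tb)^{-1}\mid t\in F\}\cup\{Fb^{-1}\}$. Multiplying every representative by $ab$ (a fixed nonzero element of $L$, hence not affecting the projective points), $j(\ell)$ becomes $\{F\,ab(a+tb)^{-1}\mid t\in F\}\cup\{Fa\}$. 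Writing $c=ab^{-1}$ and dividing through by $b$ inside, the representative becomes a fixed scalar multiple of $c(c+t)^{-1}$; so up to a projectivity $j(\ell)$ is the set $\{F(c+t)^{-1}\mid t\in F\}\cup\{F\cdot 1\}$. Thus everything is governed by the element $c$ and the subfield $F(c)=F(ab^{-1})$, which has degree $m$ over $F$ by hypothesis.

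The key step is then to work inside $E=F(c)\cong\mathbb F_{q^m}$. The elements $c+t$ for $t\in F$ all lie in $E^*$, hence so do their inverses, so the whole set $j(\ell)$ lies in the $(m-1)$-dimensional projective subspace of $\PG(n-1,q)$ corresponding to the $F$-subspace $E\subseteq L$. Now I need to produce the $(m-1)$-uple embedding of $\PG(1,q)$ realizing this set. The natural candidate: fix an $F$-basis $1,c,c^2,\dots,c^{m-1}$ of $E$ (a basis because $m=[E:F]$ and $c$ generates $E$). For $(t_0,t_1)\in F^2\setminus\{(0,0)\}$ consider the point $F(t_0+t_1 c)^{-1}$ of the subspace. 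One computes $(t_0+t_1 c)^{-1}=g(t_0,t_1)/N$ where, by multiplying numerator and denominator by the product of the other Galois conjugates $\prod_{i\ge 1}(t_0+t_1 c^{q^i})$, the numerator $g(t_0,t_1)$ is a homogeneous binary form of degree $m-1$ with coefficients in $E$, and $N=\prod_{i=0}^{m-1}(t_0+t_1 c^{q^i})\in F$ is the norm, a homogeneous form of degree $m$ over $F$. Expanding $g(t_0,t_1)$ in the basis $1,c,\dots,c^{m-1}$ yields $m$ binary forms $g_0,\dots,g_{m-1}$ of degree $m-1$ over $F$, and the map $F(t_0,t_1)\mapsto F(g_0,\dots,g_{m-1})$ is exactly a rational curve of order $m-1$ in the sense of the definition preceding Proposition \ref{117-1}. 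Since this curve spans the whole $(m-1)$-dimensional subspace $E$ (its points include $F\cdot 1, F\,c^{-1}$ and all $F(c+t)^{-1}$, which already span $E$ because $j$ restricted to this subspace is a bijection onto itself sending the independent points $F1,Fc,\dots,Fc^{m-1}$ to $m$ points whose $F$-span is again all of $E$), it is the image of an $(m-1)$-uple embedding of $\PG(1,q)$: one checks the $g_i$ have no common factor precisely because the curve is nondegenerate, spanning an $(m-1)$-space, so after the coordinate identification it must be $\mathrm{PGL}$-equivalent to the standard one in \eqref{def_c}.

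The main obstacle I anticipate is the last bookkeeping step: verifying that the $m$ forms $g_0,\dots,g_{m-1}$ obtained from the norm-trick genuinely have highest common factor $1$ and that the resulting curve is projectively the standard rational normal curve rather than some degenerate or multiply-covered image. The cleanest way around this is to avoid explicit manipulation of the conjugate product and instead argue structurally: the map $t\mapsto F(c+t)^{-1}$ extended to $\PG(1,q)$ is visibly injective (inversion is a bijection of $E^*$ and $t\mapsto c+t$ is injective), its image spans an $(m-1)$-space, and any injective rational map $\PG(1,q)\to\PG(m-1,q)$ whose image spans the target and which is given by forms of degree $m-1$ is, up to the choice of coordinates on source and target, the $(m-1)$-uple embedding — this is essentially the content of the uniqueness of normal rational curves of a given degree, which one may invoke from \cite[Chapter 21]{JWPH2}. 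One should also double-check the boundary behaviour at the point $Fb=F b$, i.e. $t_1=0$ in the parametrization, to confirm it matches $F\cdot 1$ after the normalization, so that $j(\ell)$ is the \emph{full} image of the embedding and not the image minus a point.
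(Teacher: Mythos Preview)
Your approach is genuinely different from the paper's. The paper does not compute anything: it quotes results from chain geometry (Herzer \cite{Bu95}), embedding $\PG(n-1,q)$ as the hyperplane at infinity of the affine space $L$, using that the inverse of an affine line together with $0$ is an affine NRC $\mathcal V$, and then recovering $j(\ell)$ as the trace at infinity of the secant lines and the tangent to $\mathcal V$ through the origin. Your argument, by contrast, is an explicit, self-contained computation via the norm trick, which has the merit of not depending on an external reference.

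There is, however, a real gap. Your justification that $j(\ell)$ spans the $(m-1)$-subspace corresponding to $E=F(c)$ is incorrect: you invoke the images under $j$ of the independent points $F1, Fc, \dots, Fc^{m-1}$, but only $F1$ and $Fc$ lie on the (normalized) line, so the images of the others say nothing about the span of $j(\ell)$. Worse, for $q<m-1$ the set $j(\ell)$ has only $q+1$ points and \emph{cannot} span an $(m-1)$-space, so any argument through spanning is doomed in that range, while the theorem is asserted for all $q$.

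What you actually need---and what makes your strategy work for every $q$---is that the $m$ coefficients of $g(t_0,t_1)=\prod_{i=1}^{m-1}(t_0+t_1c^{q^i})$, viewed as a binary form with coefficients in $E$, are $F$-linearly independent; a linear change of coordinates in $\PG(m-1,q)$ then carries your parametrization to the standard $(m-1)$-uple embedding. This is straightforward: writing $g(t_0,t_1)=\sum_{k=0}^{m-1}a_k\,t_1^{k}t_0^{m-1-k}$ and comparing $(t_0+t_1c)\,g(t_0,t_1)=N_{E/F}(t_0+t_1c)$ with the minimal polynomial of $c$ gives $a_k+c\,a_{k-1}=\sigma_k\in F$, whence $a_k=(-c)^k+(\text{an }F\text{-combination of }1,c,\dots,c^{k-1})$; thus $a_0,\dots,a_{m-1}$ is an $F$-basis of $E$.
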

\begin{proof}
  The assertion can be deduced from \cite{Bu95}, as follows. 
  The projective space $\PG(n-1,q)$ can be considered as the hyperplane at infinity, say $H_\infty$, of the
  affine space $L$ with line set $\{Fc+d\mid c,d\in L,\,c\neq0\}$.
  The point at infinity of the line $Fc+d$ is $Fc$.
  The lines joining the origin and the points of $\ell'=Fa+b$ meet $H_\infty$ exactly in the points of 
  $\ell\setminus\{Fa\}$.
  By \cite[prop.\ 3.6.3, theorem 3.6.5]{Bu95},
  $j(\ell')\cup\{0\}$ is a NRC without points at infinity, say $\cal V$. 
  Furthermore, $j(Fa)$ is the tangent line in 0 to $\cal V$.
  The secant lines to $\cal V$ through 0 and the tangent line $j(Fa)$ meet $H_\infty$ in the image of an
  $(m-1)$-uple embedding. 
\end{proof}

%The following is a corollary of theorem \ref{main}:
\begin{prop}\label{131023}
  If $a,b\in L^*$ and $[F(ab^{-1}):F]=2$, then the restriction of $j$ to the line containing $Fa$ and $Fb$
  is a projectivity.
\end{prop}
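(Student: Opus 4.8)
The plan is to produce an explicit $F$-linear bijection between the $2$-dimensional $F$-subspace spanning $\ell$ and the one spanning $j(\ell)$, and to check that it induces $j$ on $\ell$. The guiding observation is that inversion in $\FF_{q^2}$ agrees, up to a nonzero scalar in $F$, with the Frobenius map $w\mapsto w^q$, and that the Frobenius is $F$-linear on $\FF_{q^2}$.

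First I would put $c=ab^{-1}$. Since $[F(c):F]=2$, the field $F(c)$ is a copy of $\FF_{q^2}$ contained in $L$ (this forces $2\mid n$, which is anyway automatic because $F(c)$ is an intermediate field of $L/F$), and $\{1,c\}$ is an $F$-basis of $\FF_{q^2}$; in particular $c\notin F$, so $Fa\neq Fb$ and $\ell$ is a genuine line. Writing $a=bc$, the $F$-subspace spanning $\ell$ is $V=\langle a,b\rangle_F=b(Fc+F)=b\FF_{q^2}$, so the points of $\ell$ are exactly the $Fbw$ with $w\in\FF_{q^2}^\ast$. The key identity is that for $w\in\FF_{q^2}^\ast$ one has $w^{-1}=w^q/N(w)$ with $N(w)=w^{q+1}\in F^\ast$; hence for $z=bw\in V^\ast$,
\[
  j(Fz)=F(bw)^{-1}=Fb^{-1}w^{-1}=Fb^{-1}w^q .
\]
This already shows $j(\ell)=\{Fb^{-1}v\mid v\in\FF_{q^2}^\ast\}$, i.e.\ the line $\ell'$ spanned by $V'=b^{-1}\FF_{q^2}$, recovering the degree-$1$ instance of Theorem~\ref{main}.

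To finish I would define $\phi\colon V\to V'$ by $\phi(bw)=b^{-1}w^q$. This is well defined because the representation $z=bw$ is unique, additive because the Frobenius is additive, bijective because the Frobenius permutes $\FF_{q^2}$, and $F$-linear because for $\lambda\in F$ we have $\phi(\lambda bw)=b^{-1}(\lambda w)^q=\lambda^q b^{-1}w^q=\lambda\,\phi(bw)$, using $\lambda^q=\lambda$. Therefore $\phi$ induces a projectivity $\ell\to\ell'$ of $\PG(n-1,q)$, and by the displayed identity this projectivity coincides with the restriction of $j$ to $\ell$.

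I do not anticipate a genuine obstacle; the one point worth stating explicitly is that Theorem~\ref{main} alone does not suffice, since it only yields that $j(\ell)$ is a line, whereas a bijection between two lines of $\PG(n-1,q)$ need not be a projectivity (the collineation group of a line is $P\Gamma L(2,q)\supsetneq PGL(2,q)$ for non-prime $q$, and $j|_\ell$ is a priori merely a set bijection). The substitution $z=bw$ is exactly what converts ``inversion on a line not obviously defined over a subfield'' into ``inversion on $\FF_{q^2}^\ast$'', after which the $F$-linearity of the Frobenius closes the argument.
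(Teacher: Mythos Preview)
Your proof is correct and follows essentially the same idea as the paper's. The paper writes $\omega=ab^{-1}$ with $\omega^2=\alpha\omega+\beta$ and observes that $\langle(t+u\omega)^{-1}\rangle_F=\langle t+\alpha u-u\omega\rangle_F$, which is visibly induced by an $F$-linear map; but the conjugation $t+u\omega\mapsto t+\alpha u-u\omega$ \emph{is} the Frobenius $w\mapsto w^q$ on $F(\omega)\cong\FF_{q^2}$, so your argument and the paper's are the same computation in different notation.
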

\begin{proof}
  Let $\omega=ab^{-1}$ with $\omega^2=\alpha\omega+\beta$ and $\alpha,\beta\in F$.
  The assertion follows from $\langle (t+u\omega)^{-1}\rangle_F=\langle t+\alpha u-u\omega\rangle_F$ 
  for any $t,u\in F$.
\end{proof}

Now assume that $n$ is even.
A unique subfield $C\cong\FF_{q^2}$ exists in $L$.
Every $x\in L^*$ is associated with the line in $\PG(n-1,q)$ arising from the two-dimensional vector
subspace $Cx$. The symbol $Cx$ will denote the line itself.
The \textit{Desarguesian line spread} of $\PG(n-1,q)$ is $\cald=\{Cx\mid x\in L^*\}$.
\begin{prop}
  The restriction of $j$  to any line of $\cald$ is a projectivity with a line of $\cald$.
\end{prop}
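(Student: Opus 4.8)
The plan is to prove the two assertions separately: first that $j$ sends each line of $\cald$ to a line of $\cald$, by a direct computation, and then that the induced map is a projectivity, by reducing to Proposition~\ref{131023}.

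For the first part, fix $x\in L^*$. Since $C$ is a subfield, the line $Cx$ is the point set $\{F(cx)\mid c\in C^*\}$, and $j(F(cx))=F((cx)^{-1})=F(c^{-1}x^{-1})$. As $c$ runs through $C^*$ so does $c^{-1}$, hence $j(Cx)=\{F(c'x^{-1})\mid c'\in C^*\}=Cx^{-1}$, which is again a member of $\cald$. (In fact $j$ acts on $\cald$ as the involution $Cx\mapsto Cx^{-1}$.)

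For the second part, choose two distinct points $Fa,Fb$ of $Cx$ and write $a=c_1x$, $b=c_2x$ with $c_1,c_2\in C^*$. Since $Fa\neq Fb$ we have $c_1c_2^{-1}\notin F$, so $ab^{-1}=c_1c_2^{-1}\in C\setminus F$ and, because $[C:F]=2$, also $[F(ab^{-1}):F]=2$. Moreover $\{c_1,c_2\}$ is an $F$-basis of $C$, so $\{a,b\}$ is an $F$-basis of the two-dimensional space $Cx$; thus the line of $\PG(n-1,q)$ through $Fa$ and $Fb$ is exactly $Cx$. Proposition~\ref{131023} then gives that the restriction of $j$ to $Cx$ is a projectivity, and by the first part its image is the line $Cx^{-1}$ of $\cald$, as claimed.

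The whole argument is short once Proposition~\ref{131023} is available; the only step needing attention is the identification of the line spanned by the two chosen points of $Cx$ with $Cx$ itself, which amounts to the observation that any two distinct points of the vector line $Cx$ form an $F$-basis of it — true precisely because $C/F$ has degree $2$. I do not expect any genuine obstacle here.
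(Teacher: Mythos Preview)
Your proof is correct and follows essentially the same approach as the paper: pick two distinct points $Fa,Fb$ on a line of $\cald$, observe that $ab^{-1}\in C\setminus F$ so that $[F(ab^{-1}):F]=2$, invoke Proposition~\ref{131023} for the projectivity, and check that the image line lies in $\cald$. Your version is more explicit in computing $j(Cx)=Cx^{-1}$ directly and in verifying that the line through $Fa$ and $Fb$ really is $Cx$, whereas the paper leaves these as one-line remarks; but the underlying argument is the same.
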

\begin{proof}
  A line $\ell$ of $\cald$ contains two points of type $Fa$ and $Fb$ with $ab^{-1}\in C\setminus F$.
  So by prop.\ \ref{131023}, $j$ maps projectively $\ell$ into the line containing the points $Fa^{-1}$ and
  $Fb^{-1}$.
  The condition $ab^{-1}\in C$ implies that $j(\ell)\in\cald$.
\end{proof}
The following props. \ref{cs1} and \ref{cs2} as well as theorem \ref{spread-base} can be found in \cite{Cs13}:
\begin{prop}\label{cs1}
  Let $n$ be even.
  If $n\equiv 0 \mod 4$ and $q$ is odd, then
  the lines of $\cald$ which are fixed by $j$ are precisely two;
  otherwise there is a unique line which is fixed by $j$.
\end{prop}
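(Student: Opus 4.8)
The plan is to reduce the problem to counting the $2$-torsion of a cyclic group. First I would make the action of $j$ on $\cald$ explicit. A line of $\cald$ has the form $Cx$ with $x\in L^*$, and its points are $F(cx)$ with $c\in C^*$. Since $j(F(cx))=F(c^{-1}x^{-1})$ and $c\mapsto c^{-1}$ permutes $C^*$, this gives $j(Cx)=Cx^{-1}$, which in particular re-proves that $j$ maps $\cald$ into itself. Consequently $Cx$ is fixed by $j$ exactly when $Cx^{-1}=Cx$, i.e.\ when $x^{-1}\in Cx$, which is equivalent to $x^2\in C$ (equivalently $x^2\in C^*$).

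Next I would pass to a quotient group. Put $H=\{x\in L^*\mid x^2\in C^*\}$; this is a subgroup of $L^*$ containing $C^*$, and two elements $x,x'\in H$ determine the same fixed line $Cx=Cx'$ precisely when $xC^*=x'C^*$. Hence the set of $j$-fixed lines of $\cald$ is in bijection with $H/C^*$, which is exactly the subgroup of elements of order dividing $2$ in the cyclic group $L^*/C^*$. Since $L^*$ is cyclic of order $q^n-1$ and $C^*$ is its unique subgroup of order $q^2-1$, the quotient $L^*/C^*$ is cyclic of order
\[
d=\frac{q^n-1}{q^2-1}=1+q^2+q^4+\cdots+q^{n-2},
\]
and its $2$-torsion has order $\gcd(2,d)$. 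So the number of fixed lines is $1$ if $d$ is odd and $2$ if $d$ is even.

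It then remains only to determine the parity of $d$. If $q$ is even, then $q^n-1$ and $q^2-1$ are both odd, so $d$ is odd and there is exactly one fixed line. If $q$ is odd, then $d$ is a sum of $n/2$ odd terms, so $d\equiv n/2\pmod 2$; thus $d$ is even exactly when $n\equiv 0\pmod 4$, yielding two fixed lines in that case and one otherwise. This is precisely the assertion.

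I do not foresee a genuine obstacle here; for a statement this short the sketch is close to a complete proof. The only step that deserves a little care is the first one: one must check that $j$ sends the \emph{whole} line $Cx$ onto $Cx^{-1}$ (not merely into $\cald$), so that ``fixed line'' translates cleanly into the algebraic condition $x^2\in C$. Everything after that is elementary group theory in the cyclic quotient $L^*/C^*$.
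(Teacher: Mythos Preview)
Your argument is correct. Both you and the paper begin with the same reduction: $Cx$ is fixed by $j$ if and only if $x^2\in C$. From there the paper proceeds field-theoretically: it observes that an $x\in L\setminus C$ with $x^2\in C$ forces $[C(x):F]=4\mid n$ and characteristic odd, and it bounds the number of fixed lines by showing that if $y,z\in L\setminus C$ satisfy $y^2,z^2\in C$ then $y^2,z^2$ are non-squares in $C$, so $y^2z^{-2}$ is a square in $C$ and hence $yz^{-1}\in C$. Your route instead passes to the cyclic quotient $L^*/C^*$ and identifies the fixed lines with its $2$-torsion, reducing everything to the parity of $d=(q^n-1)/(q^2-1)=1+q^2+\cdots+q^{n-2}$. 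Your approach is more uniform and arguably cleaner---the existence, the exact count, and the case distinction all fall out of one computation---while the paper's argument is more hands-on and perhaps more geometric in flavour (the non-square trick). Either way the content is the same; your version has the advantage of making the ``exactly two'' count automatic rather than requiring a separate uniqueness argument.
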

\begin{proof}
  A line $Cx$ is fixed by $j$ if, and only if, $Cx=Cx^{-1}$, that is $x^2\in C$.
  An $x\in L\setminus C$ such that $x^2\in C$ exists if, and only if, $q$ is odd and
  $n\equiv 0 \mod 4$.
  Indeed, the divisibility of $n$ by 4 comes from the fact that $[C : F] = [C(x) : C] =2$, so that 
  $[C(x) : F] = 4$. Moreover, $q$ being odd is clear.
  If an $x\in L\setminus C$ such that $x^2\in C$ exists, there are at least two lines that are fixed by $j$, 
  precisely $C1$ and $Cx$.
  Otherwise only the line $C1$ is fixed by $j$.

  Now assume that $q$ is odd and $n\equiv 0 \mod 4$.
  Let $Cy$ and $Cz$ be any two lines fixed by $j$ and distinct from $C1$. 
  It holds $y,z\in L\setminus C$ and $y^2, z^2\in C$.
  Then $y^2,z^2$ are non-squares in $C$, and $y^2z^{-2}$ is a square in $C$.
  This implies $yz^{-1}\in C$ and finally $Cy=Cz$.
  So in this case there are at most two lines which are fixed by $j$.
\end{proof}
  
It should be noted that the image of the line through two distinct points $Fa$ and $Fb$
is a line if, and only if, $F(ab^{-1})=C$, as a consequence of the uniqueness of a subfield
of order $q^2$ in $L$.
This implies:
\begin{teor}\label{spread-base}
  The lines of $\PG(n-1,q)$ whose image under $j$ are lines are precisely those in $\cald$.
\end{teor}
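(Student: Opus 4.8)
The plan is to derive the statement directly from Theorem~\ref{main} together with the uniqueness of the quadratic subfield of $L$. First I would argue that, for the line $\ell$ through distinct points $Fa$ and $Fb$, the image $j(\ell)$ is a line if and only if $m:=[F(ab^{-1}):F]=2$. Indeed, since $Fa\neq Fb$ we have $ab^{-1}\notin F$, so $m\ge 2$; by Propositions~\ref{117-1} and~\ref{117-2} the curve obtained from the $(m-1)$-uple embedding is a NRC whose smallest order — equivalently, the dimension of its span — is $\min\{m-1,q\}$. Because $q\ge 2$, this number equals $1$ exactly when $m-1=1$, that is, when $m=2$; and a point set of this form is a line precisely when it spans a $1$-dimensional projective subspace (it then automatically has $q+1$ points). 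Hence $j(\ell)$ is a line $\iff m=2$.

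Next I would translate the condition $m=2$ into membership in $\cald$. If $m=2$, then $F(ab^{-1})$ is a subfield of $L$ with $q^2$ elements; since $C$ is the unique such subfield, $F(ab^{-1})=C$, and in particular $ab^{-1}\in C$. Conversely, $ab^{-1}\in C\setminus F$ forces $F(ab^{-1})=C$, because $[C:F]=2$ admits no proper intermediate field, so again $m=2$. Thus $j(\ell)$ is a line if and only if $ab^{-1}\in C$.

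Finally I would check that the condition ``$ab^{-1}\in C$ for two distinct points $Fa,Fb$ on $\ell$'' is equivalent to ``$\ell\in\cald$''. If $ab^{-1}\in C$ then $a\in Cb$, so $Fa+Fb\subseteq Cb$; as $\dim_F Cb=2$, this inclusion is an equality and $\ell=Cb\in\cald$. Conversely, every line of $\cald$ has the form $Cx$ for some $x\in L^*$; choosing any $\omega\in C\setminus F$, the points $Fx$ and $F\omega x$ are distinct and both lie on $Cx$, and $(\omega x)x^{-1}=\omega\in C$, so the previous paragraph applies. Chaining the three equivalences yields the claim.

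The argument is essentially bookkeeping once Theorem~\ref{main} is available; the only step that requires genuine care is the first one, where one must not conflate the (non-unique) order of the NRC $j(\ell)$ with the dimension of its span — it is the latter that decides whether $j(\ell)$ is a line, and Propositions~\ref{117-1}--\ref{117-2} are precisely what pin that dimension down as $\min\{m-1,q\}$, so that the degenerate possibilities ($m=1$, or $m-1>q$) are correctly handled.
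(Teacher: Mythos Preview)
Your proposal is correct and follows essentially the same route as the paper, which derives the theorem in one sentence from Theorem~\ref{main} and the uniqueness of the subfield $C\cong\FF_{q^2}$ in $L$. You supply the details the paper leaves implicit --- in particular the use of Propositions~\ref{117-1}--\ref{117-2} to identify the span dimension of $j(\ell)$ as $\min\{m-1,q\}$, and the explicit verification that $ab^{-1}\in C$ is equivalent to $\ell\in\cald$.
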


The previous proposition can be partly generalized.
\begin{prop}\label{cs2}
  Assume $n$ is any integer and let $m>1$ be a divisor of $n$ and such that $m-1\ge q$.
  Then the $(m-1)$-subspaces of $\PG(n-1,q)$ whose images under $j$ are $(m-1)$-subspaces form a
  Desarguesian spread of $\PG(n-1,q)$.
\end{prop}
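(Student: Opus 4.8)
The plan is to show that these $(m-1)$-subspaces are precisely the members of the Desarguesian spread determined by the unique subfield of $L$ of order $q^m$. Since $m\mid n$, such a subfield $M\cong\FF_{q^m}$ exists; in analogy with the case $m=2$, for $x\in L^*$ let $Mx$ denote the $(m-1)$-subspace of $\PG(n-1,q)$ arising from the $m$-dimensional $F$-subspace $\{yx\mid y\in M\}$, and set $\cald_m=\{Mx\mid x\in L^*\}$. I would first recall that $\cald_m$ is a (Desarguesian) spread: the point $Fz$ lies on $Mz$, and $Fz\in Mx$ forces $zx^{-1}\in M$, hence $Mz=Mx$.

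The inclusion of $\cald_m$ in the family in question is immediate: from $j(F(yx))=F(y^{-1}x^{-1})$ and $\{y^{-1}\mid y\in M^*\}=M^*$ one gets $j(Mx)=Mx^{-1}\in\cald_m$, so $j$ permutes $\cald_m$. What remains is the reverse inclusion, which is the substance of the argument.

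Let $\Sigma$ be an $(m-1)$-subspace with $j(\Sigma)$ again an $(m-1)$-subspace, and write $\Sigma=\{Fv\mid v\in V^*\}$ and $j(\Sigma)=\{Fw\mid w\in W^*\}$ for $m$-dimensional $F$-subspaces $V,W$ of $L$. First I would normalize: picking $Fx_0\in\Sigma$ and noting that $j(x_0^{-1}\Sigma)=x_0\,j(\Sigma)$ (because $j(F(x_0^{-1}v))=F(x_0v^{-1})$) is again a subspace, one may replace $\Sigma$ by $x_0^{-1}\Sigma$ and assume $1\in V$. Since $V$ and $W$ are $F$-linear, the union of the points of $j(\Sigma)$ equals $W$ on the one hand and $\{v^{-1}\mid v\in V^*\}\cup\{0\}$ on the other, so $W^*=\{v^{-1}\mid v\in V^*\}$. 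The key step is then to exploit that $W$ is closed under addition: for $v_1,v_2\in V^*$ with $v_1+v_2\neq0$ one has $v_1^{-1}+v_2^{-1}\in W^*$, that is, $v_1v_2(v_1+v_2)^{-1}\in V$; writing $v_1+v_2=s$ and $v_1=v$ (hence $v_2=s-v$) this is equivalent to
\[
  \frac{v^{2}}{s}\in V\qquad\text{for all }v\in V\text{ and all }s\in V^*.
\]
From here I would deduce that $V$ is closed under multiplication. If $F$ has odd characteristic, then $\bigl((a+b)^{2}-a^{2}-b^{2}\bigr)/s=2ab/s\in V$ for all $a,b\in V$ and $s\in V^*$, and taking $s=1$ gives $ab\in V$. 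If $F$ has characteristic $2$, then $v\mapsto v^{2}$ is injective on $L$ and carries $V$ into itself (take $s=1$), hence is a bijection of the finite set $V$; therefore $\{v^{2}/s\mid v\in V\}=s^{-1}V\subseteq V$, which forces $s^{-1}V=V$, i.e. $sV=V$, for every $s\in V^*$, so again $V$ is multiplicatively closed. In both cases $V$ is a finite subring of $L$ containing $1$, hence a subfield; since $|V|=q^{m}$ it equals $M$. Undoing the normalization, $\Sigma=Mx_0\in\cald_m$, which completes the argument.

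The step I expect to be the main obstacle is the last one: passing from ``$j(\Sigma)$ is a subspace'' to the identity $v^{2}/s\in V$, and then to the multiplicative closure of $V$; in characteristic $2$ the naive polarization fails and one must instead use that squaring is a bijection of $V$. I would also note that the hypothesis $m-1\ge q$ is not needed for this characterization; it singles out the range (complementary to that of the partition theorem in Section~\ref{partitions}) in which, by Theorem~\ref{main} together with Propositions~\ref{117-1} and~\ref{117-2}, a line joining two points $Fa,Fb$ of a member of $\cald_m$ with $[F(ab^{-1}):F]=m$ is carried by $j$ to a normal rational curve of order $q$, i.e. to a $(q+1)$-tuple of independent points rather than to a curve of full degree $m-1$.
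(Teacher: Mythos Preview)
The paper does not supply its own proof of this proposition; it merely attributes the result to \cite{Cs13}. Your argument is therefore not being compared against a proof in the paper, and on its own merits it is correct. The normalization $1\in V$, the passage from additive closure of $W$ to the relation $v^{2}/s\in V$, and the separate treatments of odd and even characteristic all go through as written; the conclusion that $V$ is the unique subfield of order $q^{m}$ is then immediate.

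Your observation that the hypothesis $m-1\ge q$ plays no role in the characterization is also correct: your proof uses only that $V$ and $W$ are $F$-subspaces of the same dimension, and indeed the case $m=2$ (Theorem~\ref{spread-base}) is stated in the paper without any restriction on $q$. The hypothesis $m-1\ge q$ is relevant only for the interpretation you give at the end, concerning the degree of the NRC obtained from a line inside a member of $\cald_m$, not for the spread characterization itself.
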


\section{Partitions in normal rational curves}\label{partitions}
\subsection{Three-dimensional case}
The term mixed partition in the literature refers to distinct geometric concepts.
Here a \textit{mixed partition of type $r$} in $\PG(3,q)$, $0\le r\le q^2$,
is a partition of $\PG(3,q)$ in $r$ lines and $q^2+1-r$ NRCs or order three.
In \cite{Co00} mixed partitions of type 2 are constructed and investigated.
The following is immediate by means of theorem \ref{spread-base}:
\begin{prop}\label{emp}
  If a line spread $\calf$ in $\PG(3,q)$ exists having exactly $r$ lines in common
  with a Desarguesian spread $\cald$, then there is a mixed partition of type $r$.
\end{prop}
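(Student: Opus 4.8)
The plan is to apply $j$ to the spread $\calf$ and read off the resulting family of point sets. Since $j^2$ is the identity on $\PG(3,q)$, the map $j$ is a bijection, so $\{j(\ell)\mid\ell\in\calf\}$ is again a partition of the point set of $\PG(3,q)$ into $q^2+1$ classes, each of size $q+1$ (because every line has $q+1$ points and a spread partitions the points). It then remains only to identify these classes: I must show that exactly $r$ of them are lines and that the other $q^2+1-r$ are NRCs of order three.

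The line count is immediate from theorem\ \ref{spread-base}: for $\ell\in\calf$ the set $j(\ell)$ is a line if and only if $\ell\in\cald$, and by hypothesis $\calf\cap\cald$ has exactly $r$ elements, so precisely $r$ of the images $j(\ell)$ are lines. For the remaining lines $\ell\in\calf\setminus\cald$, I would pick two distinct points $Fa$, $Fb$ on $\ell$ and put $m=[F(ab^{-1}):F]$. Since $F(ab^{-1})$ is a subfield of $L=\FF_{q^4}$, $m$ divides $4$; since $Fa\neq Fb$ we have $m>1$; and $m=2$ is ruled out, for $[F(ab^{-1}):F]=2$ would force $F(ab^{-1})=C$ (uniqueness of the subfield of order $q^2$), whence, by the remark preceding theorem\ \ref{spread-base}, $j(\ell)$ would be a line and $\ell\in\cald$, contrary to assumption. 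Hence $m=4$, and by theorem\ \ref{main} the set $j(\ell)$ is the image of a $3$-uple embedding of $\PG(1,q)$, which by prop.\ \ref{117-1} is a NRC of order three in $\PG(3,q)$ when $q\ge3$ (for $q=2$ one appeals to prop.\ \ref{117-2} and the convention of taking the smallest order). Assembling the two cases, $\{j(\ell)\mid\ell\in\calf\}$ is a partition of $\PG(3,q)$ into $r$ lines and $q^2+1-r$ NRCs of order three, that is, a mixed partition of type $r$.

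The one genuinely delicate point — the step I expect to be the main obstacle — is excluding the value $m=2$ for lines outside $\cald$, i.e. linking the algebraic invariant $[F(ab^{-1}):F]$ to membership in the Desarguesian spread; this is precisely what theorem\ \ref{spread-base} (together with prop.\ \ref{131023}) encapsulates, so once that is in hand the argument reduces to bookkeeping about the involution $j$ and elementary point counts.
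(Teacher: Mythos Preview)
Your proposal is correct and is exactly the argument the paper has in mind when it declares the proposition ``immediate by means of theorem~\ref{spread-base}'': apply the involution $j$ to $\calf$, use theorem~\ref{spread-base} to see that precisely the $r$ lines in $\calf\cap\cald$ map to lines, and use theorem~\ref{main} together with $m\mid 4$, $m\neq1,2$ to get $m=4$ and hence twisted cubics for the rest. Your caveat about $q=2$ is accurate (the images are then triples of non-collinear points, i.e.\ NRCs of order $2$ under the paper's smallest-order convention), but this is a wrinkle in the paper's definition rather than in your reasoning.
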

By prop.\ \ref{emp}, in order to constuct a partition of $\PG(3,q)$ in NRCs
it is enough to find a line spread sharing no line with a given Desarguesian spread.
\begin{prop}\label{1112-1}
  The projective space $\PG(3,q)$ contains mixed partitions of type $0$, for any $q$.
\end{prop}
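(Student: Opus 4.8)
The plan is to reduce the assertion, via proposition~\ref{emp} and theorem~\ref{spread-base}, to a combinatorial existence statement, and then to settle that statement by a counting argument rather than by an explicit construction. Concretely, it suffices to produce a single line spread $\calf$ of $\PG(3,q)$ having no line in common with the Desarguesian spread $\cald$: for such an $\calf$ the set $j(\calf)$ partitions $\PG(3,q)$, and by theorem~\ref{spread-base} a line of $\PG(3,q)$ has a line as its image under $j$ precisely when it lies in $\cald$, so every member of $j(\calf)$ is a (proper) normal rational curve, i.e.\ $j(\calf)$ is a mixed partition of type $0$.

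For the existence of $\calf$ I would argue as follows. Let $S$ be the number of line spreads of $\PG(3,q)$; it is positive, since $\cald$ is one. The group $\mathrm{PGL}(4,q)$ acts transitively on the lines of $\PG(3,q)$ and maps spreads to spreads, so the number of spreads containing a prescribed line is the same for every line; denote it by $s_0$. Counting in two ways the incident pairs consisting of a spread and a line lying in it, and recalling that $\PG(3,q)$ has $(q^2+1)(q^2+q+1)$ lines while each spread has $q^2+1$ lines, one gets $S(q^2+1)=(q^2+1)(q^2+q+1)\,s_0$, hence $S=(q^2+q+1)\,s_0$. A spread that meets $\cald$ must contain one of the $q^2+1$ lines of $\cald$, so the number of spreads meeting $\cald$ is at most $(q^2+1)\,s_0=(q^2+1)S/(q^2+q+1)$, which is strictly less than $S$ because $q^2+1<q^2+q+1$. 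Therefore at least $qS/(q^2+q+1)>0$ of the spreads of $\PG(3,q)$ are disjoint from $\cald$; any one of them serves as $\calf$.

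I do not expect a real obstacle here: the argument uses only the line-transitivity of $\mathrm{PGL}(4,q)$ (to make the quantity $s_0$ meaningful) and the numerical slack $q^2+1<q^2+q+1$ (to make the union bound conclusive). If one insists on an \emph{explicit} partition, the natural alternative is to take $\calf=\phi(\cald)$ for a well-chosen $\mathbb F_q$-linear bijection $\phi$ of $L=\mathbb F_{q^4}$ that is not semilinear over the subfield $C\cong\mathbb F_{q^2}$. Writing $\phi$ as a linearized polynomial $\phi(z)=\sum_{i=0}^{3}\lambda_i z^{q^i}$ and using $c^{q^2}=c$, $c^{q^3}=c^q$ for $c\in C$, one obtains $\phi(cx)=c\,A(x)+c^q B(x)$ with $A(x)=\lambda_0 x+\lambda_2 x^{q^2}$ and $B(x)=\lambda_1 x^q+\lambda_3 x^{q^3}$, whence $\phi(Cx)=A(x)\cdot\{\,c+c^q B(x)/A(x):c\in C\,\}$. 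One then has to choose the $\lambda_i$ so that $A$ and $B$ vanish nowhere on $L^*$ (equivalently, so that two explicit ratios of the $\lambda_i$ avoid the norm-one subgroup of $L^*$) and so that $B(x)/A(x)\notin C$ for all $x\in L^*$; verifying that this last requirement can be met for every $q$ is the only delicate step in the constructive version, and it is precisely what the uniform construction behind theorem~\ref{x7} takes care of.
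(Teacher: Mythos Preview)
Your counting argument is correct and complete: line-transitivity of $\mathrm{PGL}(4,q)$ makes $s_0$ well-defined, the double count gives $S=(q^2+q+1)s_0$, and since $s_0\ge1$ (as $\cald$ itself witnesses $S\ge1$) the union bound $(q^2+1)s_0<S$ yields a spread disjoint from $\cald$. Invoking proposition~\ref{emp} then finishes the job exactly as you say.

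The route, however, is quite different from the paper's. The authors give an \emph{explicit} construction via the Klein correspondence: the Desarguesian spread $\cald$ corresponds to an elliptic quadric $\cale$ in a solid $S\subset Q^+(5,q)$; they pick a secant-free line $\lambda\subset S$, note that $\lambda^\perp\cap Q^+(5,q)$ is an elliptic quadric meeting $\cale$ in exactly two points, so the associated Desarguesian spread $\calf_1$ shares exactly two lines $\ell_1,\ell_2$ with $\cald$; a single regulus switch through $\ell_1,\ell_2$ then produces the desired $\calf$ with $\calf\cap\cald=\emptyset$. Their argument buys an explicit, reusable object (and the regulus-switching idea feeds naturally into the higher-dimensional construction of section~\ref{generalizzazione}), at the cost of invoking the Klein map and quadric geometry. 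Your averaging argument is shorter and needs nothing beyond line-transitivity and elementary counting, but it is non-constructive; the explicit alternative you sketch at the end is in spirit closer to what the paper actually does in the general case.
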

\begin{proof}
  Under the Klein correspondence, the spread $\cald$ is associated with the intersection
  of $Q^+(5,q)$ with a solid $S$, and $S\cap Q^+(5,q)$ is an elliptic quadric $\cale$.
  Take any line $\lambda$ in $S$ such that $\lambda\cap\cale=\emptyset$.
  The solid $\lambda^\perp$ intersects $\cale$ in two points, corresponding to
  two lines $\ell_1,\ell_2\in\cald$.
  The quadric $\lambda^\perp\cap Q^+(5,q)$ is associated with a Desarguesian spread $\calf_1$,
  and $\cald\cap\calf_1=\{\ell_1,\ell_2\}$.
  By replacing in $\calf_1$ a regulus containing $\ell_1$ and $\ell_2$ with the opposite one,
  a new spread $\calf$ is obtained having no common line with $\cald$.
  So, the assumptions of prop.\ \ref{emp} with $r=0$ are satisfied.
\end{proof}

\subsection{Generalization}\label{generalizzazione}
Now a generalization of prop.\ \ref{1112-1} to any odd-dimensional finite projective space is obtained.
In this subsection, $n>1$ is an integer, and
\[
  L=\mathbb F_{q^{2n}};\ M=\mathbb F_{q^n};\ C=\mathbb F_{q^2};\ F=\mathbb F_q;\ F\subset M\subset L;\ 
  F\subset C\subset L.
\]
Any of the fields above is considered as a vector space over $F$.
Let $i\in L\setminus M$, so any $z\in L$ can be described uniquely in the form $z=a+ib$ with $a,b\in M$.
The points of $\PG(2n-1,q)$ are $Fz$ for $z\in L^*$.

For every $a\in M$ define $S_a=\{Fc(a+i)\mid c\in M^*\}$, furthermore let
$S_{\infty}=\{Fc\mid c\in M^*\}$,  $M^+=M\cup\{\infty\}$, $\cals=\{S_a\mid a\in M^+\}$.
\begin{prop}\label{x2}
  \begin{enumerate}[{\rm (i)}]
    \item For any $a\in M^+$, $S_a$ is an $(n-1)$-subspace of $\PG(2n-1,q)$.
    \item If $a,a'\in M^+$, $a\neq a'$, then $S_a\cap S_{a'}=\emptyset$.
    \item The collection $\cals$ is the standard Desarguesian spread of $(n-1)$-subspaces of
    $\PG(2n-1,q)$.
  \end{enumerate}
\end{prop}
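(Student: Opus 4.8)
The plan is to handle the three parts in sequence, working throughout with the $F$-space decomposition $L=M\oplus iM$, equivalently with $\{1,i\}$ as an $M$-basis of $L$. For each $a\in M$ set $\widehat S_a:=M(a+i)=\{c(a+i)\mid c\in M\}$ and $\widehat S_\infty:=M$; note that the point set $S_a$ is exactly the set of $F$-lines contained in $\widehat S_a$, since every nonzero element of $\widehat S_a$ has the form $c(a+i)$ with $c\in M^*$ (here $a+i\neq0$).

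For (i), I would observe that $\widehat S_a$ is the image of $M$ under multiplication by the fixed element $a+i\in L^*$; as this map is an $F$-linear bijection of $L$, we get $\dim_F\widehat S_a=\dim_F M=n$, and likewise $\dim_F\widehat S_\infty=n$. Hence each $S_a$ is the point set of an $(n-1)$-subspace of $\PG(2n-1,q)$.

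For (ii), I would prove the stronger statement $\widehat S_a\cap\widehat S_{a'}=\{0\}$ when $a\neq a'$. If $a,a'\in M$ and $0\neq v=c(a+i)=c'(a'+i)$ with $c,c'\in M^*$, then comparing the components in $\{1,i\}$ gives $c=c'$ (the $i$-component) and $ca=c'a'$ (the $1$-component), so $a=a'$, a contradiction. If $a'=\infty$, then $v\in M$ forces the $i$-component of $v=ca+ci$ to vanish, i.e. $c=0$, again impossible. This yields $S_a\cap S_{a'}=\emptyset$.

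For (iii), by (i) and (ii) the family $\cals$ consists of $|M^+|=q^n+1$ pairwise disjoint $(n-1)$-subspaces, and since each carries $(q^n-1)/(q-1)$ points they jointly cover $(q^n+1)(q^n-1)/(q-1)=(q^{2n}-1)/(q-1)$ points, exactly the point count of $\PG(2n-1,q)$; hence $\cals$ is an $(n-1)$-spread. To identify it with the standard Desarguesian spread, that is, the spread whose members are the $1$-dimensional $M$-subspaces $Mz$ ($z\in L^*$) of $L$ viewed as a $2$-dimensional $M$-space, I would write a general $z=a_0+ib_0\in L^*$ and note that $Mz=M(a_0b_0^{-1}+i)=\widehat S_{a_0b_0^{-1}}$ if $b_0\neq0$, and $Mz=M=\widehat S_\infty$ if $b_0=0$. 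Thus $\{\widehat S_a\mid a\in M^+\}$ is precisely the set of $1$-dimensional $M$-subspaces of $L$, so $\cals$ is the Desarguesian spread. The whole argument is routine linear algebra over the tower $F\subset M\subset L$; the only point needing a little care is the coordinate bookkeeping in (ii) and the normalization $z\mapsto b_0^{-1}z$ in (iii), and I do not expect a real obstacle.
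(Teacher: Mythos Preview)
Your proof is correct and follows essentially the same route as the paper: both identify $S_a$ with the $M$-line $M(a+i)\subset L$, verify disjointness by comparing coordinates in the $M$-basis $\{1,i\}$, and establish (iii) by matching each $Mz$ with some $\widehat S_a$ via the normalization $z\mapsto b_0^{-1}z$. The only cosmetic difference is that you insert an explicit point-count to confirm $\cals$ is a spread before the identification, whereas the paper proceeds directly to the two-way correspondence with the subspaces $Mz$.
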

\begin{proof}
(i) The union of the one-dimensional subspaces belonging to $S_a$ is an $n$-subspace of $L$.

(ii) If $a\neq\infty$, then $\langle a+i\rangle_M\cap M=\{0\}$ implies $S_a\cap S_\infty=\emptyset$.
Now assume $a\neq\infty\neq a'$.
If $Fc(a+i)=Fc'(a'+i)$ for $c,c'\in M^*$, then $c(a+i)=\rho c'(a'+i)$ for some $\rho\in F$, and this
implies $a=a'$.

(iii) First, the union of the elements of any $S_a$, $a\in M^+$, is of type $Mz$, so it is an element
of the standard Desarguesian spread of $(n-1)$-subspaces of $\PG(2n-1,q)$.
Conversely, given an $Mu$ with $u=s+it\neq0$, $s,t\in M$, it holds
$Mu=\langle st^{-1}+i\rangle_M$ and the associated projective subspace is $S_{st^{-1}}$, possibly $S_\infty$.
\end{proof}

The reader will immediately verify the next result:
\begin{prop}\label{x3}
  \begin{enumerate}[{\rm (i)}]
    \item The map $\hat{\varphi}:\,L\rightarrow L$ defined by $\hat{\varphi}(a+ib)=a^q+ib$ for $a,b\in M$
    is an automorphism of the $F$-vector space $L$.
    \item Let $\varphi:\,\PG(2n-1,q)\rightarrow\PG(2n-1,q)$ be the projectivity associated with
    $\hat{\varphi}$.
    Then
    \[
      S_a^\varphi=\{\langle c^qa^q+ci\rangle_F\mid c\in M^*\}\mbox{ for }a\in M,\quad S_\infty^\varphi=S_\infty,
      \quad S_0^\varphi=S_0.
    \]
  \end{enumerate}
\end{prop}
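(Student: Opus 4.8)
The plan is to verify both statements by a direct coordinate computation, since the only real content is the bookkeeping with the $M$-basis $\{1,i\}$ of $L$.

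For part (i), I would first observe that writing $z=a+ib$ with $a,b\in M$ is precisely taking coordinates with respect to the $M$-basis $\{1,i\}$ of $L$; this basis exists because $i\notin M$ and $[L:M]=2$, which is exactly what makes the decomposition recalled just before the statement unique. In these coordinates $\hat\varphi$ is the map $(a,b)\mapsto(a^q,b)$, i.e.\ it applies the Frobenius $\sigma\colon x\mapsto x^q$ of $M$ over $F$ to the first coordinate and the identity to the second. Both components are additive, and $\sigma$ is $F$-linear because it fixes $F=\FF_q$ pointwise, so that $(\lambda a)^q=\lambda^q a^q=\lambda a^q$ for $\lambda\in F$; hence $\hat\varphi$ is $F$-linear. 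It is bijective since $\sigma$ is an automorphism of $M$ with inverse $x\mapsto x^{q^{n-1}}$, so $\hat\varphi^{-1}(a+ib)=a^{q^{n-1}}+ib$. Thus $\hat\varphi$ is an automorphism of the $F$-vector space $L$.

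For part (ii), because $\hat\varphi$ is an $F$-linear bijection it induces a well-defined projectivity $\varphi$ of $\PG(2n-1,q)$ with $\varphi(Fz)=F\hat\varphi(z)$, and it remains only to evaluate $\varphi$ on the generators of the subspaces $S_a$. For $a\in M$ a generic point of $S_a$ is $\langle c(a+i)\rangle_F=\langle ca+ci\rangle_F$ with $c\in M^*$; here the \emph{first} coordinate is $ca$ and the \emph{second} is $c$, so $\hat\varphi(ca+ci)=(ca)^q+ci=c^qa^q+ci$, and letting $c$ run over $M^*$ yields the stated description of $S_a^\varphi$. For $S_\infty=\{Fc\mid c\in M^*\}$ the point $Fc$ has coordinates $(c,0)$, whence $\hat\varphi(c)=c^q$; since $c\mapsto c^q$ permutes $M^*$ we get $S_\infty^\varphi=\{Fc^q\mid c\in M^*\}=S_\infty$. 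Finally $S_0=\{Fci\mid c\in M^*\}$ consists of points with coordinates $(0,c)$, each fixed by $\hat\varphi$; hence $\varphi$ fixes $S_0$ pointwise, and in particular $S_0^\varphi=S_0$ — this is also the $a=0$ instance of the first formula.

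There is essentially no obstacle: everything reduces to the Frobenius of $M$ over $F$ being an $F$-linear automorphism. The only point that needs a moment's care is to keep straight which element of $M$ plays the role of the $1$-component and which the $i$-component when a point of $S_a$ is written in the form $c(a+i)$; once that is settled the three identities drop out, and the compatibility of $\varphi$ with multiplication by $F^*$ (needed for $\varphi$ to be well defined on projective points) is exactly the $F$-linearity proved in (i).
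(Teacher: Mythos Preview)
Your argument is correct and is precisely the routine verification the paper intends: the authors give no proof at all, simply stating that ``the reader will immediately verify'' the result, and your coordinate computation with the $M$-basis $\{1,i\}$ is exactly that verification.
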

  
Denote by $\cals^\varphi=\{S_a^\varphi\mid a\in M^+\}$ the image of the spread $\cals$ under $\varphi$.
\begin{prop}\label{x4}
  For any $a,b\in M^*$, the intersection $S_a\cap S_b^\varphi$ contains at most one point.
\end{prop}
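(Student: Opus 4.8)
The plan is to reduce the statement to a counting argument about solutions of an equation over $M$. Suppose $Fz \in S_a \cap S_b^\varphi$ for some $a,b \in M^*$. By definition of $S_a$ there is $c \in M^*$ with $z = \rho\, c(a+i)$ for some $\rho \in F^*$; absorbing $\rho$ into $c$ we may simply write $Fz = Fc(a+i)$ with $c\in M^*$. By Proposition \ref{x3}(ii), the point $Fz$ also lies in $S_b^\varphi$ precisely when $Fz = \langle d^q b^q + d i\rangle_F$ for some $d \in M^*$. Writing $z = a c + i c$ in the coordinates $1, i$ (an $M$-basis of $L$), and $d^q b^q + di$ in the same coordinates, the condition $Fz = \langle d^q b^q + di\rangle_F$ forces the existence of $\lambda \in F^*$ with
\[
  \lambda\, a c = d^q b^q, \qquad \lambda\, c = d.
\]
Here I would use that $a, b, c, d \in M$ and $\lambda \in F$, so both equations live in $M$.

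The next step is to eliminate $\lambda$ and $d$. From the second equation $d = \lambda c$, hence $d^q = \lambda^q c^q = \lambda c^q$ since $\lambda \in F$ means $\lambda^q = \lambda$. Substituting into the first equation gives $\lambda a c = \lambda c^q b^q$, and since $\lambda, c \in M^*$ we may cancel to obtain
\[
  a = c^{q-1} b^q.
\]
Thus for fixed $a, b \in M^*$ the candidate values of $c \in M^*$ are exactly the solutions of $c^{q-1} = a b^{-q}$ in $M^* = \FF_{q^n}^*$. The point $Fc(a+i)$ depends only on $Fc$, i.e.\ only on $c$ up to an $F^*$-multiple, and multiplying $c$ by an element of $F^*$ changes $c^{q-1}$ by $(F^*)^{q-1} = \{1\}$ (as $\mu^{q-1}=1$ for $\mu \in F^*$). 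So the number of distinct points in $S_a \cap S_b^\varphi$ is the number of cosets of $F^*$ among the solutions of $c^{q-1} = ab^{-q}$, and the map $c \mapsto c^{q-1}$ is exactly $(q-1)$-to-one from $M^*$ onto its image (its kernel being $\{c : c^{q-1}=1\} = F^*$). Hence there is at most one such coset, i.e.\ at most one point.

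The main obstacle I anticipate is purely bookkeeping: being careful that the projective identifications are handled correctly, in particular that passing from "$Fz$ equals this one-dimensional $F$-space" to the scalar equations above does not lose or introduce spurious solutions, and that the scalar $\lambda$ really can be taken in $F$ rather than in $M$ (this is where the hypothesis that $1$ and $i$ are $M$-independent, so that the two coordinate equations decouple with a common $F$-scalar, is essential). Once the equation $c^{q-1} = ab^{-q}$ is isolated, the conclusion is immediate from $|\ker(c\mapsto c^{q-1})| = q-1 = |F^*|$.
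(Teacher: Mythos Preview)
Your proposal is correct and follows essentially the same route as the paper: set up the coordinate equations coming from the $M$-basis $\{1,i\}$, eliminate the $F^*$-scalar to obtain a single equation of the form $c^{\,q-1}=ab^{-q}$ (the paper writes it as $d^{\,q-1}=ab^{-q}$, which is the same since $c$ and $d$ differ by an element of $F^*$), and observe that its solution set in $M^*$ is either empty or a single $F^*$-coset. Your write-up is a bit more explicit about why a single coset yields a single projective point, but the argument is the same.
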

\begin{proof}
  The common points to both subspaces are of type $\langle c(a+i)\rangle_F=\langle d^qb^q+di\rangle_F$ 
  for some $c,d\in M^*$.
  A $\rho\in F^*$ exists such that $ca=\rho d^qb^q$ and $c=\rho d$.
  This implies $d^{q-1}=ab^{-q}$.
  This equation has either zero or exactly $q-1$ roots in the form $\sigma d_0$, $\sigma\in F^*$.
\end{proof}

A \textit{scattered subset} with respect to a spread is a set of points intersecting any spread
element in at most one point.
\begin{teor}\label{x5}
  If $n$ is even, then a line spread $\calf$ of $\PG(2n-1,q)$ exists such that any line of $\calf$
  is scattered with respect to the Desarguesian spread $\cals$.
\end{teor}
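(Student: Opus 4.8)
The plan is to build $\calf$ from the two Desarguesian spreads $\cals$ and $\cals^\varphi$ already in play, exploiting Proposition \ref{x4}, which tells us that an $(n-1)$-space of $\cals$ and an $(n-1)$-space of $\cals^\varphi$ meet in at most a point. Since $n$ is even, each $(n-1)$-subspace is odd-projective-dimensional and therefore carries a line spread of its own. The idea is to decide, for each pair $(a,b)\in M^+\times M^+$, whether to cover the points of $S_a$ using a line spread \emph{internal} to $S_a$, or to peel off certain lines that lie inside the intersection pattern with $\cals^\varphi$; then reassemble a global line spread of $\PG(2n-1,q)$ whose lines are never contained in a single element of $\cals$.

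First I would record the combinatorial skeleton. By Proposition \ref{x2}(iii) the spread $\cals$ partitions the $\theta_{2n-1}$ points of $\PG(2n-1,q)$ into $q^n+1$ blocks of size $\theta_{n-1}$, and likewise for $\cals^\varphi$; the fixed blocks $S_\infty$ and $S_0$ are common to both. For $a,b\in M^*$ Proposition \ref{x4} gives $|S_a\cap S_b^\varphi|\le 1$. A counting check — each point of $S_a\setminus(S_\infty\cup S_0)$ lies in exactly one $S_b^\varphi$ — shows that generically $S_a$ meets all but a bounded number of the $S_b^\varphi$ in exactly one point, so the two spreads are "almost transversal." This is the structure one wants in order to pick lines that each hit $\cals$ twice.

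Next comes the construction itself. The cleanest route is to transport the three-dimensional idea of Proposition \ref{1112-1}: there a regulus of the Desarguesian (line) spread of $\PG(3,q)$ was replaced by the opposite regulus to kill all intersections with a fixed Desarguesian spread. Here one works one rank up: inside a suitable $3(n/1)$-type configuration — concretely, inside subspaces spanned by small unions of elements of $\cals$ and $\cals^\varphi$ — I would replace "reguli of $(n-1)$-spaces" by their opposite systems, using that $\cals$ and $\cals^\varphi$ are both Desarguesian and hence normalized by a common large linear group. Alternatively, and perhaps more directly, one defines $\calf$ as the set of lines of the form $\langle P, P'\rangle$ where $P$ ranges over $S_a$ and $P'$ is its (unique, when it exists) partner in the transversal spread $\cals^\varphi$; after handling the degenerate points where no partner exists by a separate explicit pairing inside $S_\infty\cup S_0$ (both of which have even projective dimension $n-1$ and thus admit their own line spreads), one checks these lines are pairwise disjoint and cover everything. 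A line $\langle P,P'\rangle$ of this type has $P\in S_a$ and $P'\in S_b^\varphi$ with $S_b^\varphi\neq S_a$; since a line contained in $S_a$ would have both its points in $S_a$, forcing $P'\in S_a$, contradicting $|S_a\cap S_b^\varphi|\le1$ together with $P\neq P'$, every line of $\calf$ is scattered with respect to $\cals$, as required.

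The main obstacle is the bookkeeping at the "missing partner" points — the points of $S_a$ that lie in no $S_b^\varphi$ with $b\in M^*$, i.e. those landing in $S_\infty^\varphi=S_\infty$ or $S_0^\varphi=S_0$ — and, symmetrically, making sure the partnering map $P\mapsto P'$ can be chosen to be a fixed-point-free involution on the relevant point set so that the lines $\langle P,P'\rangle$ form an honest partition rather than merely a cover. This is exactly where $n$ even is used twice: once so that each spread element $S_a$ is odd-dimensional and carries a line spread to fall back on for the exceptional points, and once to get the parity needed for the involution. I expect the verification that the union of (i) the transversal lines between $S_a$ and $\cals^\varphi$, and (ii) a fixed line spread of $S_\infty\cup S_0$ patched along the exceptional locus, is a partition, to reduce to the counting identity from the second paragraph plus Proposition \ref{x4}; the geometry is then immediate.
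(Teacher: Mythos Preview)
Your proposal has a genuine gap: the central construction --- the ``partnering map $P\mapsto P'$'' that is supposed to produce the lines of $\calf$ --- is never actually defined, and the sketch you give cannot be completed as stated. A point $P\in S_a$ lies in a unique element $S_b^\varphi$ of $\cals^\varphi$, but there is no canonical second point $P'$ attached to it; you acknowledge this as ``the main obstacle'' and then simply assert that an involution with the right properties exists, without producing one. Worse, your fallback for the exceptional locus --- ``a fixed line spread of $S_\infty\cup S_0$'' --- is fatal: any line lying inside $S_0$ or inside $S_\infty$ is \emph{contained} in an element of $\cals$ and hence is not scattered, which is exactly the opposite of what you need. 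So the problematic points (those in $S_0\cup S_\infty$) are precisely the ones you cannot handle by internal line spreads.

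The paper's argument avoids all of this with a much simpler device. It works on the side of $\cals^\varphi$, not $\cals$: for each $a\in M^*$ the subspace $S_a^\varphi$ is scattered with respect to $\cals$ by Propositions~\ref{x3} and~\ref{x4}, so \emph{any} line spread of $S_a^\varphi$ (which exists because $n$ is even) already consists of scattered lines. This disposes of all points except those in $S_0^\varphi=S_0$ and $S_\infty^\varphi=S_\infty$. To cover those, the paper picks a regulus $\calr\subset\cals^\varphi$ of $(n-1)$-spaces containing $S_0$ and $S_\infty$ (this exists since $\cals^\varphi$ is Desarguesian) and takes $\calf'$ to be the set of transversal lines of $\calr$; each such transversal meets both $S_0$ and $S_\infty$, hence two distinct elements of $\cals$, and is therefore scattered. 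The final spread is $\calf'$ together with internal line spreads of the $S_a^\varphi$ not in $\calr$. No pairing, no involution, no bookkeeping is required.
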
  
\begin{proof}
  Let $\calr$ be a regulus of $(n-1)$-subspaces such that $S_0,S_\infty\in\calr$, 
  and $\calr\subset\cals^\varphi$; such a regulus exists because $\cals^\varphi$ is a Desarguesian spread.
  Let $M^-=\{a\in M\mid S_a^\varphi\not\in\calr\}$.
  For any $a\in M^-$ choose a line spread $\calf_a$ of the $(n-1)$-subspace $S_a^\varphi$.
  Let $\calf'$ be the set of transversal lines of the regulus $\calr$.
  Define
  \[
    \calf=\calf'\cup\bigcup_{a\in M^-}\calf_a.
  \]
  By construction, $\calf$ is a line spread of $\PG(2n-1,q)$.
  Let $\ell$ be any line of $\calf$.
  If $\ell\in\calf_a$, $a\in M^-$, then $\ell$ is contained in a scattered subspace with respect to $\cals$
  (prop.\ \ref{x3}, \ref{x4}), so it is also scattered.
  If $\ell\in\calf'$, then it intersects at least two elements $S_0$ and $S_\infty$ of $\cals$ and this
  implies that $\ell$ is scattered.
\end{proof}

In the previous theorem, $n$ has to be even here in order to guarantee that each $(n-1)$-subspace 
admits a line spread.
Now we further assume $n=2^{k-1}$, $k>1$ an integer.
So,
\[
  L=\mathbb F_{q^{2^k}}\supset M=\mathbb F_{q^{2^{k-1}}}\supseteq C=\mathbb F_{q^2}\supset F=\mathbb F_q.
\]
\begin{teor}\label{x6}
  Let $\PG(2^k-1,q)=\{Fz\mid z\in L^*\}$.
  Then for any line $\ell$ of $\PG(2^k-1,q)$, $j(\ell)$ is the image of a $(2^k-1)$-uple
  embedding if, and only if, $\ell$ is scattered with respect to the Desarguesian spread $\cals$.
\end{teor}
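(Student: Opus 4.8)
The plan is to route everything through Theorem~\ref{main} and the divisor structure of $2^k$. Fix two distinct points $Fa$, $Fb$ of $\ell$ and set $\omega=ab^{-1}$, $m=[F(\omega):F]$. Rescaling the representatives by elements of $F^*$, or replacing $Fa,Fb$ by another pair of distinct points of $\ell$, changes $\omega$ into $(\alpha\omega+\beta)(\gamma\omega+\delta)^{-1}$ with $\alpha,\beta,\gamma,\delta\in F$ and $\alpha\delta-\beta\gamma\neq0$, hence leaves $F(\omega)$ — and so $m$ — unchanged; thus $m$ is an invariant of $\ell$, and $m\mid n=2^k$. By Theorem~\ref{main}, $j(\ell)$ is the image of an $(m-1)$-uple embedding. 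With the convention fixed after Proposition~\ref{117-2} (a NRC is given its smallest order, which equals the dimension of its span) together with Propositions~\ref{117-1} and~\ref{117-2}, for $m<2^k$ the set $j(\ell)$ spans a subspace of dimension $\min\{m-1,q\}\le 2^{k-1}-1<2^k-1$, while the image of a $(2^k-1)$-uple embedding spans a subspace of dimension $\min\{2^k-1,q\}$; in the principal range $q\ge 2^k-1$ these dimensions disagree, so $j(\ell)$ is the image of a $(2^k-1)$-uple embedding exactly when $m=2^k$. It therefore suffices to show that $\ell$ is scattered with respect to $\cals$ if and only if $m=2^k$.

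For the arithmetic, recall that the subfields of $L=\FF_{q^{2^k}}$ are precisely the fields $\FF_{q^{2^i}}$, $0\le i\le k$; they form a chain under inclusion, so $M=\FF_{q^{2^{k-1}}}$ is the unique maximal proper subfield of $L$. Since $F(\omega)$ is a subfield of $L$ containing $\omega$, one has either $F(\omega)\subseteq M$ or $F(\omega)=L$; hence $m=2^k$ if and only if $\omega\notin M$ (equivalently, $2^k$ is the only divisor of $2^k$ that does not divide $2^{k-1}$).

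It remains to identify $\omega\notin M$ with scatteredness. A line fails to be scattered with respect to $\cals$ exactly when some member of $\cals$ meets it in at least two points; two distinct points of $\ell$ span $\ell$ and any subspace through them contains $\ell$, so $\ell$ is non-scattered if and only if $\ell\subseteq S$ for some $S\in\cals$. By Proposition~\ref{x2}(iii) the members of $\cals$ are the projectivizations of the $F$-subspaces $Mc$, $c\in L^*$, so $\ell\subseteq S$ for some $S\in\cals$ means $\langle a,b\rangle_F\subseteq Mc$ for some $c\in L^*$, that is $ac^{-1}\in M$ and $bc^{-1}\in M$. Taking $c=b$ shows this holds whenever $\omega=ab^{-1}\in M$, and conversely $ac^{-1},bc^{-1}\in M$ forces $\omega=(ac^{-1})(bc^{-1})^{-1}\in M$. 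Thus $\ell$ is non-scattered iff $\omega\in M$, so $\ell$ is scattered iff $\omega\notin M$ iff $m=2^k$ iff $j(\ell)$ is the image of a $(2^k-1)$-uple embedding, as claimed. The step that requires care is the first equivalence: one must be sure the order of the NRC $j(\ell)$ — the dimension of its span — actually detects $m$, and this is precisely where the absence of a divisor of $2^k$ strictly between $2^{k-1}$ and $2^k$ is used; everything else is bookkeeping with the coordinatization of $\PG(2^k-1,q)$ furnished by $\cals$.
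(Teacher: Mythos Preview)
Your proof is correct and follows the same route as the paper's: both reduce to showing that $\ell$ is scattered iff $ab^{-1}\notin M$ iff $[F(ab^{-1}):F]=2^k$, invoking Theorem~\ref{main} together with the fact that $M=\FF_{q^{2^{k-1}}}$ is the unique maximal proper subfield of $L=\FF_{q^{2^k}}$. The span-dimension comparison you insert (and the attendant restriction to $q\ge 2^k-1$) is extra caution not present in the paper, which simply reads ``image of a $(2^k-1)$-uple embedding'' as the conclusion of Theorem~\ref{main} with $m=2^k$; your remark that for small $q$ the span dimension alone cannot separate $m=2^k$ from smaller divisors is a fair observation about the phrasing, but the paper bypasses it by interpretation rather than by an additional argument.
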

\begin{proof}
  Given a line $\ell$ through two distinct points $Fz_1$, $Fz_2$, then $\ell$ is scattered if, and only if,
  $z_1z_2^{-1}\not\in M$.
  On the other hand, if $z_1z_2^{-1}\in M$, then, by theorem \ref{main}, $j(\ell)$ is a NRC of
  degree at most $2^{k-1}-1$, whereas if $z_1z_2^{-1}\not\in M$, since every proper subfield of $L$ is
  contained in $M$, 
  then  the extension $F(z_1z_2^{-1})$ is equal to $L$, and this implies once again by theorem \ref{main}
  that $j(\ell)$ is the image of a $(2^k-1)$-uple embedding.
\end{proof}

\begin{teor}\label{x7}
  For any $k>1$ and $q\ge2^k-1$, the projective space
  $\PG(2^k-1,q)$ has a partition in normal rational curves
  of degree $2^k-1$. 
  For $q<2^k-1$ the projective space has a partition in $(q+1)$-tuples of independent points.
\end{teor}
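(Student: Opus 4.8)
The plan is to combine the scattered line spread produced in Theorem~\ref{x5} with the geometric dictionary of Theorem~\ref{x6}, exploiting that $j$ is an involutory bijection of the point set of $\PG(2^k-1,q)$. First I would set $n=2^{k-1}$; since $k>1$ this $n$ is even, so Theorem~\ref{x5} applies to $\PG(2n-1,q)=\PG(2^k-1,q)$ and produces a line spread $\calf$ all of whose lines are scattered with respect to the Desarguesian $(n-1)$-spread $\cals$. (The projective dimension $2^k-1$ being odd, line spreads of $\PG(2^k-1,q)$ exist, and the stronger scattered property is exactly what Theorem~\ref{x5} guarantees.)

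Next I would transport $\calf$ by $j$. Because $j$ maps $\PG(2^k-1,q)$ bijectively onto itself and $j^2$ is the identity, the family $\{\,j(\ell)\mid\ell\in\calf\,\}$ is again a partition of the point set: the images are pairwise disjoint and cover every point. Since each $\ell\in\calf$ is scattered with respect to $\cals$, Theorem~\ref{x6} says that each $j(\ell)$ is the image of a $(2^k-1)$-uple embedding of $\PG(1,q)$, i.e.\ a curve of the form $\calc$ in \eqref{def_c} with $r=2^k-1$.

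It then remains only to describe such images, which is precisely the content of Propositions~\ref{117-1} and~\ref{117-2}. If $q\ge 2^k-1$, the $q+1\ge 2^k$ points on the image of the $(2^k-1)$-uple embedding span $\PG(2^k-1,q)$, so by Proposition~\ref{117-1} each $j(\ell)$ is a normal rational curve of order $2^k-1$; hence $\{\,j(\ell)\mid\ell\in\calf\,\}$ is the desired partition of $\PG(2^k-1,q)$ into normal rational curves of degree $2^k-1$. If instead $q<2^k-1$, then by Proposition~\ref{117-2} each $j(\ell)$ is a normal rational curve of order $q$ inside a $q$-dimensional subspace; such a curve is a set of $q+1$ points spanning that subspace, hence a set of $q+1$ independent points, and one obtains the announced partition into $(q+1)$-tuples of independent points.

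I do not expect a genuine obstacle in this argument: the substantial work, namely constructing a line spread meeting the Desarguesian spread only in the strong (scattered) sense, has already been carried out in Theorem~\ref{x5}, and Theorem~\ref{x6} is exactly the translation showing that being scattered is what forces $j(\ell)$ to attain the full degree $2^k-1$. The only points that need a little care are the bookkeeping that an involutory bijection carries a partition to a partition, and the count $q+1\ge 2^k$ which ensures that, when $q\ge 2^k-1$, the image of the embedding spans the whole of $\PG(2^k-1,q)$ rather than a proper subspace.
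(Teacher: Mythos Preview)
Your proposal is correct and follows exactly the route the paper intends: the paper's own proof is a one-line remark that the result is a straightforward consequence of Theorems~\ref{x5} and~\ref{x6} (the reference to ``\ref{x7}'' there is evidently a typo), and you have simply spelled out those details, together with the appeal to Propositions~\ref{117-1} and~\ref{117-2} for the dichotomy between $q\ge 2^k-1$ and $q<2^k-1$.
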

\begin{proof}
  This is a straightforward consequence of theorems \ref{x6} and \ref{x7}.
\end{proof}

\textbf{Acknowledgements}\quad
The authors thank Hans Havlicek for his helpful remarks in the preparation of this paper.
This research was supported by a Progetto di Ateneo from Universit\`a di Padova (CPDA113797/11).

%  In order to generalise the previous proposition one has to geometrically describe the map 
%  $Fx\mapsto Fx^{(q^{h-1}-1)/(q-1)}$ in the case gcd$(h-1,n)=1$.

\end{document}